\newcommand{\gr}{\text{Gr}(E)}
\newcommand{\str}{\mathcal{O}}
\newcommand{\proj}{\mathbb{P}}
\newtheorem{theorem}{Theorem}[section]
\newtheorem{proposition}[theorem]{Proposition}
\newtheorem{lemma}[theorem]{Lemma}
\newtheorem{corollary}[theorem]{Corollary}
\theoremstyle{definition}
\newtheorem{remark}[theorem]{Remark}
\newtheorem{example}[theorem]{Example}
\numberwithin{equation}{section}
\begin{document}

\baselineskip=15pt

\title[Seshadri constants and Grassmann bundles]{Seshadri constants and Grassmann 
bundles over curves}

\author[I. Biswas]{Indranil Biswas}

\address{School of Mathematics, Tata Institute of Fundamental
Research, Homi Bhabha Road, Mumbai 400005, India}

\email{indranil@math.tifr.res.in}

\author[K. Hanumanthu]{Krishna Hanumanthu}

\address{Chennai Mathematical Institute, H1 SIPCOT IT Park, Siruseri, Kelambakkam 603103, India}

\email{krishna@cmi.ac.in}

\author[D. S. Nagaraj]{D. S. Nagaraj}

\address{The Institute of Mathematical Sciences (HBNI), CIT
Campus, Taramani, Chennai 600113, India; Indian Institute of Science Education and
Research, Tirupati, Rami Reddy Nagar, Karakambadi Road,
Mangalam (P.O.) Tirupati 517507, India}

\email{dsn@imsc.res.in and dsn@iisertirupati.ac.in}

\author[P. E. Newstead]{Peter E. Newstead}

\address{Department of Mathematical Sciences, University of Liverpool, Peach 
Street, Liverpool, L69 7ZL, England}

\email{newstead@liverpool.ac.uk}

\subjclass[2010]{14H60, 14Q20, 14C20}

\keywords{Seshadri constant, Harder-Narasimhan filtration, Grassmann bundle, nef cone,
pseudo-effective cone.}

\date{}

\begin{abstract}
Let $X$ be a smooth complex projective curve, and let $E$ be a vector
bundle on $X$ which is not semistable. For a suitably
chosen integer $r$, let $\gr$ be 
the Grassmann bundle over $X$ that parametrizes the quotients of the fibers of 
$E$ of dimension $r$. Assuming some numerical conditions on the
Harder-Narasimhan filtration of $E$, we study Seshadri constants of
ample line bundles on $\gr$. In many cases, we give the precise values of
the Seshadri constants. Our results generalize various known results for
the special case of ${\rm rank}(E) \,=\, 2$. We include some examples in rank $4$.
\end{abstract}

\maketitle
\section{Introduction}\label{se1}

Seshadri constants were introduced by Demailly \cite{D} in 1990.
He was motivated by an ampleness criterion of Seshadri \cite[Theorem 7.1]{H}.
Seshadri constants have become an important area of research dealing with positivity
of line bundles on projective varieties. 

Let $M$ be a smooth complex projective variety, and let $L$ be a nef 
line bundle on $M$. Take any point $x \,\in\, M$. The {\it Seshadri constant of $L$
at $x$}, which is a nonnegative real number denoted by $\varepsilon(M,L,x)$, is defined
to be
$$\varepsilon(M,L,x)\,:=\, \inf\limits_{\substack{x \in C}} \frac{L\cdot
C}{{\rm mult}_{x}C}\, ,$$
where the infimum is taken over all closed curves in $M$ passing through the point $x$. Here 
$L\cdot C$ denotes the intersection multiplicity, so only the
numerical class of $L$ is relevant in the above definition, and ${\rm mult}_x C$ denotes the 
multiplicity of the curve $C$ at $x$. It is easy to check that the infimum can
equivalently be taken just over irreducible and reduced curves $C$.

Seshadri's above
mentioned criterion for ampleness says that 
$L$ is ample if and only if $\varepsilon(M,L,x) \,>\, 0$ for all $x \,\in\, M$.

If the variety $M$ is clear from the context, we simply write
$\varepsilon(L,x)$ instead of $\varepsilon(M,L,x)$.

Usually, the Seshadri constants are not easy to compute precisely
and a lot of work
has focused on giving bounds on Seshadri constants. If $L$ is an
ample line bundle on a projective variety $M$ of dimension $n$, then it is
not difficult to show
that $\varepsilon(L,x)\,\le \,\sqrt[n]{L^n}$ for all $x \,\in\, M$, where $L^n$ denotes the top
self-intersection of $L$. So $\varepsilon(L,x)$ belongs to the
interval $(0,\sqrt[n]{L^n}]$. In many specific cases, it is an 
interesting problem to shrink this range of possible values for Seshadri
constants and a lot of work has been done in this direction. For an
overview of current research on Seshadri constants, see \cite{B}.

Since the Seshadri constant of an ample line bundle $L$ at every point
$x$ is bounded above by $\sqrt[n]{L^n}$, we can consider their 
supremum as $x$ varies and define\footnote{It may be mentioned that the numeral 1 in the
definition of $\varepsilon(L,1)$ refers to the fact that we are
considering Seshadri constants at one point. Seshadri constants can also 
be defined at a finite set of points and in this generality they are
called {\it multi-point} Seshadri constants.}
$$\varepsilon(L,1) \,:=\, \sup\limits_{x\in M}
\varepsilon(L,x)\, .$$ 

We also define
$$\varepsilon(L) \,:=\, \inf\limits_{\substack{x \in M}} \varepsilon(L,x)\, .$$

When $L$ is ample, the following inequalities hold for every
$x \,\in\, M$: $$0 \,<\,
\varepsilon(L) \,\le\, \varepsilon(L,x) \,\le\, \varepsilon(L,1)
\,\le\, \sqrt[n]{L^n}\, .$$ 

An example of Miranda \cite[Example 5.2.1]{La2} shows that
$\varepsilon(L)$ can be arbitrarily
small; to be more precise, given a real
number $\delta > 0$, there exists an algebraic surface $M$ (which is
obtained by blowing up the projective plane $\mathbb{P}^2$ at suitably
chosen points) and an ample line bundle $L$ on $M$ such that
$\varepsilon(M,L) \,< \,\delta$. 

On the other hand, Ein and Lazarsfeld, \cite{EL}, have proved that 
$\varepsilon(M,L,1)\,\ge\, 1$ for any ample line bundle $L$ on any surface $M$. In 
fact, they show that the inequality $\varepsilon(M,L,x)\,< \, 1$ holds for at most countably many 
points $x \,\in\, M$. In the case of surfaces, it is known that 
$\varepsilon(M,L,1)$ is achieved at {\it very general} points on $M$ (that is, 
points outside a countable union of Zariski closed proper subsets in $M$); for 
more details, see \cite{Og}.

As the two results mentioned above suggest, most of the research on Seshadri constants is 
focused on the case of surfaces. There are many important open problems on Seshadri constants
for surfaces which attract most attention and not a lot is known for higher 
dimensional varieties.

Let $M$ be a smooth complex projective variety of dimension $n$. For
any ample line bundle $L$ on $M$, it is proved in \cite{EKL} that
$\varepsilon(L,1) \,\ge\, \frac{1}{n}$. This partially generalizes the
earlier mentioned result of Ein and Lazarsfeld for $n\,=\,2$. Some improvements
for this are known for specific classes of varieties. 
For example, if $M$ is a threefold, Cascini and Nakamaye in \cite{CN} obtained the bound
$\varepsilon(L,1) \,>\, \frac{1}{2}$.
If $M$ is a Fano variety of dimension $n \, \ge \, 3$ and its anti-canonical 
divisor is globally generated, then $\varepsilon(L) \, \ge \,
\frac{1}{n-2}$, except when $M$ is a del Pezzo three-fold of 
degree 1 (see \cite{Le}). When $M$ is a
toric variety and more generally when $M$ has toric degenerations, Ito,
\cite{It2}, has bounds on $\varepsilon(L,1)$ for an ample line bundle
$L$ on $M$. When $M$ is arbitrary, there are
bounds on $\varepsilon(L,1)$ in terms of Seshadri
constants on the toric variety associated to the Newton--Okounkov body
of $L$ \cite{It1}. If $M$ is an abelian variety, we have $\varepsilon(L,x)
\,=\,\varepsilon(L) \,=\, \varepsilon(L,1)$ for all $x \,\in\, M$, because
a translation does not change Seshadri constants. The case of abelian
varieties has been studied extensively; see, for example, \cite{Na, La1, Ba, Deb}.

Seshadri constants on ruled surfaces have been looked at by many authors (see 
\cite{F,HM,S}). Since ruled surfaces are projective bundles associated to rank two vector 
bundles over smooth projective curves, it is natural to ask if similar results can be proved for 
projective bundles of higher rank vector bundles on smooth curves. 
Our aim here is to study Seshadri constants for line bundles on
certain Grassmann bundles over smooth complex projective curves.

Let $X$ be a smooth complex projective curve, and let $E$ be a vector
bundle over $X$ of rank $n$. For a fixed integer $1\, \leq\, r\, <\, n$, let $\gr$
denote the Grassmann bundle over $X$ parametrizing all $r$-dimensional
quotients of fibers of $E$. We study Seshadri constants of
ample line bundles on $\gr$ for a suitably chosen $r$. Some numerical
assumptions on the Harder-Narasimhan filtration of $E$ are imposed
to obtain the results on Seshadri constants.

A related problem has been studied
in \cite{BSS}, \cite{Hac}. In these papers, the authors define Seshadri
constants for a vector bundle $E$ on a projective variety $X$ in 
terms of the tautological line bundle on the projective bundle
$\proj(E)$ over $X$. They also give some bounds on these Seshadri
constants. We remark that this is different from directly considering Seshadri
constants of line bundles on $\proj(E)$.

In Section \ref{grassmann}, we investigate the geometry of $\gr$ and 
obtain the results required for the computation of
Seshadri constants. In Section \ref{sc}, we study Seshadri constants
for ample line bundles $L$ on $\gr$.
We have two main results 
(Theorems \ref{sc1} and \ref{sc2})
on Seshadri constants. They assume
different conditions on the Harder-Narasimhan filtration of $E$. We
also prove two corollaries (Corollary \ref{cor-sc1} and Corollary
\ref{cor-sc2}) to our main theorems which give values of 
$\varepsilon(L,1)$ and $\varepsilon(L)$.
Our results generalize many of the known results in the case of ruled
surfaces (i.e., when $E$ has rank two); this is discussed in Remark
\ref{compare-rank2}. We also give several examples to
illustrate our results. 

\section{Unstable bundles and real N\'eron--Severi group}\label{grassmann}

\subsection{Nef and pseudo-effective cones of a Grassmann bundle}

Let $X$ be a connected smooth complex projective curve. The genus of $X$ will be denoted
by $g$. Let $E$ be a vector bundle over $X$ which is not semistable. Let
\begin{equation}\label{e1}
0\,=\, E_0\, \subset\, E_1\,\subset\, E_2\, \subset\, \cdots \, \subset\, E_{d-1}
\, \subset\, E_d \,=\, E
\end{equation}
be the Harder--Narasimhan filtration of $E$. Note that $d\, \geq\, 2$ because $E$ is
assumed not to be semistable.

Fix an integer $1\, <\, m\, \leq\, d$. Let
$$
r\,:=\, {\rm rank}(E/E_{m-1})\, .
$$
Let
\begin{equation}\label{e2}
f\,:\,\text{Gr}(E)\, \longrightarrow\, X
\end{equation}
be the Grassmann bundle that parametrizes the quotients of the fibers of $E$ of dimension $r$.
The real N\'eron--Severi group for $\text{Gr}(E)$ is defined to be
$$
\text{NS}(\text{Gr}(E))_{\mathbb R}\, :=\, 
\text{NS}(\text{Gr}(E))\otimes_{\mathbb Z}{\mathbb R}\,=\,
(\text{Pic}(\text{Gr}(E))/\text{Pic}^0(\text{Gr}(E)))\otimes_{\mathbb Z}{\mathbb R}\, ,
$$
where $\text{Pic}^0(\text{Gr}(E))$ is the connected component of
the Picard group $\text{Pic}(\text{Gr}(E))$
containing the identity element. A class $c\, \in\, \text{NS}(\text{Gr}(E))$ is called
effective if there is an effective divisor $D$ on $\text{Gr}(E)$ such that $c$ represents
${\mathcal O}_{\text{Gr}(E)}(D)$. The \textit{pseudo-effective cone} of $\text{Gr}(E)$ is
the closed cone in $\text{NS}(\text{Gr}(E))_{\mathbb R}$ generated by the effective classes.
The nef cone of $\text{Gr}(E)$ coincides with the closed cone in
$\text{NS}(\text{Gr}(E))_{\mathbb R}$
generated by the ample divisors on $\text{Gr}(E)$. The nef cone is contained in the
pseudo-effective cone.

For notational convenience, we will write
\begin{equation}\label{e3}
\theta:=\, \text{degree}(E/E_{m-1})\, .
\end{equation}

Let
\begin{equation}\label{cl}
{\mathcal L}\,\in\, \text{NS}(\text{Gr}(E))
\end{equation}
be the class of a fiber of the 
projection $f$ in \eqref{e2}, so $\mathcal L$ is the class of $f^*L_1$ for a line 
bundle $L_1$ on $X$ of degree one. The tautological line bundle on $\text{Gr}(E)$ 
(the $r$-th exterior product of the tautological quotient vector bundle) will be 
denoted by ${\mathcal O}_{\text{Gr}(E)}(1)$.

We will now recall Proposition 4.1 of page 363 in \cite{BP} since it will be 
used here a couple of times.
Take any integer $1\, \leq\, r''\, <\, \text{rank}(E)$. Let $1\,
\leq\, t\, \le \, d$ be the unique largest integer such that
$$
\text{rank}(E/E_{t-1}) \,\ge\, r''
$$
(see \eqref{e1}). 

For any vector bundle $V$ on $X$, denote ${\rm degree}(V)/{\rm rank}(V)\,\in\,
\mathbb Q$ by $\mu(V)$.

Define
\begin{equation}\label{et}
\theta_{E,r''}\, :=\, (r''-\text{rank}(E/E_{t}))\cdot \mu(E_{t}/E_{t-1})+\text{degree}
(E/E_{t})\, .
\end{equation}

Let $\varphi\, :\, {\rm Gr}_{r''}(E)\, \longrightarrow\, X$ be the Grassmann bundle
parametrizing the $r''$ dimensional quotients of the fibers of $E$. Let
${\mathcal O}_{{\rm Gr}_{r''}(E)}(1)\, \longrightarrow\, {\rm Gr}_{r''}(E)$ be the determinant
of the tautological bundle over ${\rm Gr}_{r''}(E)$ of rank $r''$. Just as in \eqref{cl}, let
$$
{\mathcal L}''\,\in\, \text{NS}(\text{Gr}_{r''}(E))
$$
be the class of a fiber of the above projection $\varphi$.

\begin{proposition}[{\cite[Proposition 4.1]{BP}}]\label{bp41}
The nef cone of ${\rm Gr}_{r''}(E)$ is generated by ${\mathcal L}''$ and
$[{\mathcal O}_{{\rm Gr}_{r''}(E)}(1)]- \theta_{E,r''}\cdot {\mathcal L}''$, where
${\mathcal L}''$ is defined above.
\end{proposition}

If there is a positive integer $c\, <\, d$ such that ${\rm rank}(E_c)\,=\, r$
(see \eqref{e1}), then define
\begin{equation}\label{dzeta}
\zeta\, :=\, {\rm degree}(E_c)\, .
\end{equation}

\begin{lemma}\label{lem1}
\mbox{}
\begin{enumerate}
\item The nef cone of ${\rm Gr}(E)$ is generated by $\mathcal L$ (defined in \eqref{cl}) and
${\mathcal M}\, :=\, [{\mathcal O}_{{\rm Gr}(E)}(1)]- \theta\cdot {\mathcal L}$ (see
\eqref{e3}).

\item Assume that there is a positive integer $c\, <\, d$ such that ${\rm rank}(E_c)\,=\, r$. Then
the pseudo-effective cone of ${\rm Gr}(E)$ is generated by $\mathcal L$ and
$[{\mathcal O}_{{\rm Gr}(E)}(1)]- \zeta\cdot {\mathcal L}$, where $\zeta$ is defined in
\eqref{dzeta}.
\end{enumerate}
\end{lemma}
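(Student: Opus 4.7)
The plan rests on the fact that $\mathrm{NS}(\gr)_{\mathbb R}$ is two-dimensional, spanned by $\mathcal L$ and $[\str_{\gr}(1)]$; hence both cones are closed convex cones in $\mathbb R^{2}$ and each is determined by its two extremal rays. In each part I will exhibit one extremal ray in the ``vertical'' direction $\mathcal L$ together with a second extremal ray whose generator is read off from the Harder--Narasimhan filtration of $E$.

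For (1), the class $\mathcal L$ is nef as the pullback of an ample class from $X$. To check that $\mathcal M$ is nef, I test it against an arbitrary irreducible curve $C\subset\gr$. If $C$ lies in a fiber of $f$, then $\mathcal L\cdot C=0$ and $\str_{\gr}(1)\cdot C\ge 0$ because $\str_{\gr}(1)$ restricts to the Pl\"ucker polarization on each fiber, so $\mathcal M\cdot C\ge 0$. If instead $C$ dominates $X$, let $\pi\colon\widetilde C\to X$ be the composition of the normalization of $C$ with $f|_{C}$, set $e=\deg\pi$, and let $F$ be the rank-$r$ quotient of $\pi^{*}E$ on $\widetilde C$ classifying the lift $\widetilde C\to\gr$, so that $\mathcal L\cdot C=e$ and $\str_{\gr}(1)\cdot C=\deg F$. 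Since semistability of vector bundles on smooth projective curves is preserved by finite pullback in characteristic zero, the Harder--Narasimhan filtration of $\pi^{*}E$ is the $\pi^{*}$-pullback of the filtration \eqref{e1}. Because $\mathrm{rank}(E_{m-1})=n-r$, the bundle $\pi^{*}E_{m-1}$ is a rank-$(n-r)$ subbundle of $\pi^{*}E$ of maximal degree, whence any rank-$r$ quotient $F$ of $\pi^{*}E$ satisfies $\deg F\ge\deg(\pi^{*}(E/E_{m-1}))=e\theta$, giving $\mathcal M\cdot C\ge 0$. For extremality, the quotient $E\twoheadrightarrow E/E_{m-1}$ defines a section $\sigma\colon X\to\gr$ with $\sigma^{*}\str_{\gr}(1)=\det(E/E_{m-1})$ of degree $\theta$, so $\mathcal M\cdot\sigma(X)=0$; the two curves $\sigma(X)$ and a line inside a single fiber then witness $\mathcal L$ and $\mathcal M$ as the two extremal rays of the nef cone.

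For (2), the composition $f^{*}E_{c}\hookrightarrow f^{*}E\twoheadrightarrow Q$ (where $Q$ denotes the tautological rank-$r$ quotient bundle on $\gr$) is a morphism of rank-$r$ vector bundles whose determinant is a nonzero section of $\str_{\gr}(1)\otimes f^{*}(\det E_{c})^{-1}$, nonzero because the composition is a fiberwise isomorphism on a dense open subset of $\gr$. Its vanishing locus $D$ is an effective divisor of class $[\str_{\gr}(1)]-\zeta\cdot\mathcal L$, which together with the (obviously effective) class $\mathcal L$ shows that the pseudo-effective cone contains the cone generated by these two classes. Extremality of $\mathcal L$ follows as in (1) by intersecting with a line in a fiber: every pseudo-effective $a\mathcal L+b[\str_{\gr}(1)]$ satisfies $b\ge 0$.

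The main obstacle is the extremality of $[\str_{\gr}(1)]-\zeta\cdot\mathcal L$, namely showing that $[\str_{\gr}(1)]-t\cdot\mathcal L$ fails to be pseudo-effective for every $t>\zeta$. By the Boucksom--Demailly--P\u{a}un--Peternell duality between the pseudo-effective cone of divisors and the movable cone of curves, this reduces to producing a movable curve class $\gamma$ with $\str_{\gr}(1)\cdot\gamma=\zeta\cdot(\mathcal L\cdot\gamma)$. The natural candidate is a curve class represented by a section of $f$ coming from a rank-$r$ quotient $E\twoheadrightarrow F$ of degree $\zeta$ (equivalently, from a splitting of $0\to E_{c}\to E\to E/E_{c}\to 0$); such a splitting need not exist on $X$, but the complement $U=\gr\setminus D$ is a torsor over $X$ under $\mathcal{H}om(E/E_{c},E_{c})$ whose sections parametrize precisely these splittings. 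I expect the required movable class to come either from complete curves supported in $U$ (after a suitable base change) or by moving a general multisection of $\gr$ while controlling its intersection with $D$.
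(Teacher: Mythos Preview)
The paper does not prove this lemma at all: it simply cites \cite[Proposition~4.1]{BP} for part (1) and \cite[Theorem~4.1]{BHP} for part (2). Your proposal is therefore a genuinely different route, attempting a self-contained argument.

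For part (1) your sketch is essentially correct. The only step that deserves a word of justification is the assertion that $\pi^{*}E_{m-1}$ has maximal degree among rank-$(n-r)$ subbundles of $\pi^{*}E$; this is the standard Harder--Narasimhan polygon property (for any subsheaf $F\subset E$ the point $(\mathrm{rank}\,F,\deg F)$ lies on or below the HN polygon, and $E_{m-1}$ is a vertex), but as written it is asserted rather than argued. With that filled in, your direct verification of nefness and extremality is fine and has the merit of being self-contained, whereas the paper defers entirely to \cite{BP}.

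For part (2) there is a genuine gap, and you flag it yourself. You establish that $[\str_{\gr}(1)]-\zeta\cdot\mathcal L$ is effective and that $\mathcal L$ is extremal, but the extremality of $[\str_{\gr}(1)]-\zeta\cdot\mathcal L$ is left open. The BDPP strategy you outline requires a movable curve class $\gamma$ with $([\str_{\gr}(1)]-\zeta\cdot\mathcal L)\cdot\gamma=0$, and neither of your candidates is made to work: a section of $f$ of degree $\zeta$ corresponds to a splitting of $0\to E_c\to E\to E/E_c\to 0$, which need not exist on $X$; passing to a finite cover to force a splitting changes the ambient variety and does not directly produce a movable class on $\gr$; and ``moving a general multisection while controlling its intersection with $D$'' is not an argument. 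The result in \cite{BHP} is proved by a different method (controlling sections of twists of $\str_{\gr}(1)$ via the Harder--Narasimhan filtration of $\bigwedge^{r}E$), and you would need either to reproduce that or to actually construct the movable class. As it stands, part (2) is incomplete.
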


\begin{proof}
The first statement is a particular case of Proposition \ref{bp41}.

The second statement is proved in \cite{BHP} (see \cite[p.~74, Theorem~4.1]{BHP}). Note that
the rational number $\zeta_{E,r}$, defined in \cite[p.~74, (3.10)]{BHP}, coincides with our $\zeta$.
\end{proof}

\begin{proposition}\label{prop1}
Assume that $\zeta$ in \eqref{dzeta} is an integral multiple of $r$.
Then the class $[{\mathcal O}_{{\rm Gr}(E)}(1)]- \zeta\cdot {\mathcal L}\,\in\,
{\rm NS}({\rm Gr}(E))$ is effective. Furthermore, there is exactly one effective divisor
on ${\rm Gr}(E)$ whose class is $[{\mathcal O}_{{\rm Gr}(E)}(1)]- \zeta\cdot {\mathcal L}$.
\end{proposition}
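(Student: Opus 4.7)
The plan is to produce an explicit effective divisor in the class $[\mathcal{O}_{\gr}(1)] - \zeta\cdot\mathcal{L}$, and then rule out any other by a Harder--Narasimhan computation on $\wedge^r E$. For existence I would use the rank $r$ subbundle $E_c \subset E$: pulling the inclusion back along $f$ and composing with the tautological rank $r$ quotient $f^*E \twoheadrightarrow \mathcal{Q}$ yields a morphism
$$
\phi\,:\,f^*E_c\,\longrightarrow\,\mathcal{Q}
$$
of rank $r$ bundles on $\gr$. Its determinant is a global section of $\det\mathcal{Q} \otimes f^*(\det E_c)^{-1} = \mathcal{O}_{\gr}(1) \otimes f^*(\det E_c)^{-1}$, whose class in $\text{NS}(\gr)$ equals $[\mathcal{O}_{\gr}(1)] - \zeta\cdot\mathcal{L}$ since $\text{degree}(\det E_c) = \zeta$. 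The section is not identically zero because at a general point $(x,[E_x \twoheadrightarrow V])$ of $\gr$ the composition $E_{c,x} \hookrightarrow E_x \twoheadrightarrow V$ is an isomorphism (the kernel and $E_{c,x}$ are generically complementary in $E_x$ by dimension count), so its zero scheme is the desired effective divisor.

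For uniqueness, any effective divisor in the given numerical class must be the zero scheme of a nonzero section (determined up to scalar) of $\mathcal{O}_{\gr}(1) \otimes f^*M$ for some line bundle $M$ on $X$ with $\text{degree}(M) = -\zeta$. The projection formula together with the standard identification $f_*\mathcal{O}_{\gr}(1) = \wedge^r E$ (via fiberwise Plücker embedding) gives
$$
H^0\bigl(\gr,\,\mathcal{O}_{\gr}(1)\otimes f^*M\bigr) \,=\, H^0\bigl(X,\,\wedge^r E\otimes M\bigr) \,=\, \text{Hom}_X\bigl(M^{-1},\,\wedge^r E\bigr).
$$
Since $M^{-1}$ is a line bundle, any nonzero element of this Hom is injective, so counting effective divisors reduces to counting line subsheaves of $\wedge^r E$ of degree $\zeta$. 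I aim to prove that $\wedge^r E_c = \det E_c$ (with its natural inclusion) is the only one, and moreover that $\text{Hom}(\det E_c,\,\wedge^r E)$ is one-dimensional.

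The core of the argument is a slope computation. In characteristic zero, exterior powers and tensor products of semistable bundles on a smooth curve remain semistable, so the filtration on $\wedge^r E$ induced by \eqref{e1} has associated graded
$$
\bigoplus_{\substack{a_1+\cdots+a_d=r \\ 0\,\leq\,a_i\,\leq\,\text{rank}(E_i/E_{i-1})}} \wedge^{a_1}(E_1/E_0) \otimes \cdots \otimes \wedge^{a_d}(E_d/E_{d-1}),
$$
with each summand semistable of slope $\sum_i a_i\mu(E_i/E_{i-1})$. Since the slopes $\mu(E_i/E_{i-1})$ strictly decrease and $\sum_{i\leq c}\text{rank}(E_i/E_{i-1}) = r$, this sum is maximised by the unique partition with $a_i = \text{rank}(E_i/E_{i-1})$ for $i\leq c$ and $a_i = 0$ otherwise, producing a rank one piece equal to $\det E_c$ of slope $\zeta$. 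Therefore $\mu_{\max}(\wedge^r E) = \zeta$, the maximal destabilising subsheaf is precisely $\wedge^r E_c \hookrightarrow \wedge^r E$, and $\mu_{\max}(\wedge^r E / \wedge^r E_c) < \zeta$. Any line subsheaf of $\wedge^r E$ of degree $\zeta$ must map to zero in this quotient, hence lie in $\det E_c$ and coincide with it; the same vanishing pins $\text{Hom}(\det E_c,\wedge^r E)$ to one dimension, forcing $M \cong (\det E_c)^{-1}$ and determining the section up to scalar. The main obstacle I expect is exactly this last slope step: turning the decomposition of the associated graded of $\wedge^r E$ into the sharp statement that $\wedge^r E_c$ is the unique saturated line subsheaf of maximal slope needs both semistability of tensor products in characteristic zero and the strict ordering of the Harder--Narasimhan slopes.
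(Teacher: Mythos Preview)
Your argument is correct and matches the paper's: both exhibit the effective divisor via the inclusion $\bigwedge^r E_c \hookrightarrow \bigwedge^r E$ (the paper after twisting $E$ by an $r$-th root of $(\det E_c)^{-1}$, you directly via the determinant of $f^*E_c\to\mathcal{Q}$), and both prove uniqueness by identifying $\det E_c$ as the first Harder--Narasimhan subsheaf of $\bigwedge^r E$ so that any degree-$\zeta$ line subsheaf must coincide with it. Your version spells out the slope computation on the associated graded of $\bigwedge^r E$ more explicitly than the paper does, and as a small bonus it never invokes the hypothesis $r\mid\zeta$, which the paper uses only to perform its normalising twist.
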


\begin{proof}
Let $L$ be a line bundle on $X$ such that $L^{\otimes r}\,=\, \bigwedge^r (E_c)^*$
(recall that $\zeta$ is an integral multiple of $r$). We note that $\text{degree}(E_c\otimes L)
\,=\,0$, in fact the determinant line bundle $\bigwedge^r (E_c\otimes L)$
is the trivial line bundle.

Replace $E$ by ${\mathcal E}\,:=\, E\otimes L$. Let
\begin{equation}\label{eh}
h\, :\, {\rm Gr}({\mathcal E})\, \longrightarrow\, X
\end{equation}
be the Grassmann bundle that parametrizes the quotients of the fibers of $\mathcal E$ of dimension $r$.
Then ${\rm Gr}({\mathcal E})$ is canonically identified with ${\rm Gr}(E)$.

Given a vector bundle $U'$ and a line bundle $L'$ on $X$, the vector bundle $U'\otimes L'$
is semistable if and only if $U'$ is semistable. Also, we have $\mu(U'\otimes L')\,=\,
\mu(U') +\mu(L')$. Therefore, the Harder--Narasimhan filtration of $U'\otimes L'$ is simply
the Harder--Narasimhan filtration of $U'$ tensored with the line bundle $L'$. In particular,
the Harder--Narasimhan filtration of $\mathcal E$ is the filtration in \eqref{e1} tensored
with $L$, meaning the filtration
$$
0\,=\, E_0\otimes L\, \subset\, E_1\otimes L\, \subset\, \cdots \, \subset\, E_{d-1}\otimes L
\, \subset\, E_d\otimes L \,=\, E\otimes L\,=\, {\mathcal E}
$$
is the Harder--Narasimhan filtration of $\mathcal E$.
Since $\text{degree}(E_c\otimes L)
\,=\,0$, the class of ${\mathcal O}_{{\rm 
Gr}({\mathcal E})}(1)$ in ${\rm NS}({\rm Gr}(E))\,=\, {\rm NS}({\rm Gr}( {\mathcal 
E}))$ coincides with the class $[{\mathcal O}_{{\rm Gr}(E)}(1)]- \zeta\cdot {\mathcal L}$ in 
Lemma \ref{lem1}(2).

We have
\begin{equation}\label{dtr}
\bigwedge\nolimits^r {\mathcal E}\, \supset\, \bigwedge\nolimits^r (E_c\otimes L)\,=\,
(\bigwedge\nolimits^r E_c)\otimes L^{\otimes r}\,=\, {\mathcal O}_X\, .
\end{equation}
This inclusion of the coherent sheaf
${\mathcal O}_X$ in $\bigwedge\nolimits^r {\mathcal E}$ produces an inclusion
$$
{\mathbb C}\,=\, H^0(X,\, {\mathcal O}_X)
\, \hookrightarrow\, H^0(X,\, \bigwedge\nolimits^r {\mathcal E})\,=\,
H^0({\rm Gr}({\mathcal E}),\, {\mathcal O}_{{\rm Gr}({\mathcal E})}(1))\, .
$$
Let $D$ be the effective divisor on ${\rm Gr}(E)\,=\, {\rm Gr}({\mathcal E})$ given by
this one-parameter family of nonzero sections.

Since the class of ${\mathcal O}_{{\rm Gr}({\mathcal E})}(1)$ in ${\rm NS}({\rm Gr}(E))$ is 
$[{\mathcal O}_{{\rm Gr}(E)}(1)]- \zeta\cdot {\mathcal L}$, the effective divisor $D$ 
constructed above lies in the class $[{\mathcal O}_{{\rm Gr}(E)}(1)]- \zeta\cdot {\mathcal L}$.
In particular, the class $[{\mathcal O}_{{\rm Gr}(E)}(1)]- \zeta\cdot {\mathcal L}$ is effective.

Now we will show that there is only one effective divisor in the class $[{\mathcal O}_{{\rm 
Gr}(E)}(1)]- \zeta\cdot {\mathcal L}\,=\, [{\mathcal O}_{{\rm Gr}({\mathcal E})}(1)]$.

Let $D'$ be an effective divisor on ${\rm Gr}({\mathcal E})$ that lies in the
class $[{\mathcal O}_{{\rm Gr}({\mathcal E})}(1)]$. Then
\begin{equation}\label{ed}
{\mathcal O}_{{\rm Gr}({\mathcal E})}(D')\,=\, {\mathcal O}_{{\rm Gr}({\mathcal E})}(1)\otimes
h^*L_0\, ,
\end{equation}
where $L_0$ is a line bundle on $X$ of degree zero and $h$ is the projection
to $X$ in \eqref{eh}. Now using the projection formula, we have
$$
H^0({\rm Gr}({\mathcal E}),\, {\mathcal O}_{{\rm Gr}({\mathcal E})}(1)\otimes
h^*L_0)\,=\, H^0(X,\, (\bigwedge\nolimits^r {\mathcal E})\otimes L_0)\, .
$$

Let $U$ be a vector bundle on $X$, let $U_1\, \subset\, U$ be the first nonzero term in the
Harder--Narasimhan filtration for $U$. For any integer $1\, \leq\, s\, \leq\, \text{rank}(U_1)$,
the first nonzero term in the Harder--Narasimhan filtration for the vector bundle $\bigwedge^s U$
is $\bigwedge^s U_1$. Furthermore, if $U'$ is any nonzero term in the
Harder--Narasimhan filtration for $U$, then the first nonzero term
in the Harder--Narasimhan filtration for $\bigwedge^b U$, where
$b\,=\, \text{rank}(U')$, is $\bigwedge^b U'$.

Since $r\, =\, \text{rank}(E_c\otimes L)$, from the above property of the Harder--Narasimhan
filtrations we know that the first nonzero term in the Harder--Narasimhan filtration for
$\bigwedge\nolimits^r {\mathcal E}$ is the line bundle 
$\bigwedge\nolimits^r (E_c\otimes L)\,=\, (\bigwedge\nolimits^r E_c)\otimes L^{\otimes r}
\,=\, {\mathcal O}_X$ (as in \eqref{dtr}). Therefore, the first nonzero term in the
Harder--Narasimhan filtration for
$(\bigwedge\nolimits^r {\mathcal E})\otimes L_0$ is the line bundle
$\bigwedge\nolimits^r (E_c\otimes L)\otimes L_0\,=\, L_0$ (recall that the Harder--Narasimhan
filtration of the tensor product of $U$ with a line bundle is the tensor product of the
Harder--Narasimhan filtration $U$ with the line bundle).
We have $\text{degree}(L_0)\,=\, 0$, so all the other graded pieces of the
Harder--Narasimhan filtration for $(\bigwedge\nolimits^r {\mathcal E})\otimes L_0$ (other
than $L_0$) are of negative degree. Since a semistable vector bundle of negative degree
does not admit any nonzero section, from the Harder--Narasimhan filtration for
$(\bigwedge\nolimits^r {\mathcal E})\otimes L_0$ it follows that
\begin{equation}\label{ed2}
H^0(X,\, (\bigwedge\nolimits^r {\mathcal E})\otimes L_0)\, \subset\,
H^0(X,\, L_0)\, .
\end{equation}
On the other hand, from \eqref{ed} it follows that
$$
H^0(X,\, (\bigwedge\nolimits^r {\mathcal E})\otimes L_0)\, \not=\, 0\, .
$$
Since $\text{degree}(L_0)\,=\, 0$, from \eqref{ed2} it now follows that
\begin{itemize}
\item $L_0$ is the trivial line bundle ${\mathcal O}_X$, and

\item $\dim H^0(X,\, (\bigwedge\nolimits^r {\mathcal E})\otimes L_0)\, =\, 1$.
\end{itemize}
This completes the proof of the proposition.
\end{proof}

\subsection{Bound on the degree of a quotient}

Recall that $m$ is a fixed integer with $1\,<\,m\,\le\, d$, and $r\,=\,
{\rm rank}(E/E_{m-1})$. The following is a corollary of Lemma \ref{lem1}(1):

\begin{corollary}\label{cor1}
Let $W$ be a quotient bundle of $E$ of rank $r$. Then
${\rm degree}(W) \, \geq\, \theta$.
\end{corollary}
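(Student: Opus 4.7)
The plan is to exploit the universal property of the Grassmann bundle to convert a rank $r$ quotient of $E$ into a section of $f$, and then to test the nef class $\mathcal{M}$ from Lemma \ref{lem1}(1) against the image of that section.

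More concretely, I would proceed as follows. A surjection $E \twoheadrightarrow W$ with $W$ a rank $r$ vector bundle on $X$ corresponds, by the universal property of $\text{Gr}(E)$, to a section $s : X \longrightarrow \text{Gr}(E)$ of the projection $f$ in \eqref{e2}, such that the pull-back of the tautological rank $r$ quotient bundle is $W$. In particular,
$$s^{*}\mathcal{O}_{\text{Gr}(E)}(1) \;=\; s^{*}\bigl(\det(\text{taut. quotient})\bigr) \;=\; \det W.$$
Let $C \,:=\, s(X)\,\subset\, \text{Gr}(E)$, which is an irreducible reduced curve, and $s : X \to C$ is an isomorphism.

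Next, I would compute the two relevant intersection numbers. Since $\mathcal{L} = f^{*}L_{1}$ for a degree one line bundle $L_{1}$ on $X$, and $f \circ s = \mathrm{id}_{X}$, we have
$$\mathcal{L}\cdot C \;=\; \deg_{X}(s^{*}f^{*}L_{1}) \;=\; \deg(L_{1}) \;=\; 1.$$
Similarly,
$$[\mathcal{O}_{\text{Gr}(E)}(1)]\cdot C \;=\; \deg_{X}(s^{*}\mathcal{O}_{\text{Gr}(E)}(1)) \;=\; \deg(\det W) \;=\; \deg(W).$$

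Finally, by Lemma \ref{lem1}(1), the class $\mathcal{M} = [\mathcal{O}_{\text{Gr}(E)}(1)] - \theta\cdot \mathcal{L}$ lies in the nef cone of $\text{Gr}(E)$. Intersecting with the effective curve class $[C]$ gives
$$0 \;\leq\; \mathcal{M}\cdot C \;=\; \deg(W) - \theta,$$
which is exactly the claim. There is essentially no obstacle here; the only thing to verify carefully is the identification $s^{*}\mathcal{O}_{\text{Gr}(E)}(1) = \det W$, which is built into the definition of the tautological bundle recalled just before Lemma \ref{lem1}.
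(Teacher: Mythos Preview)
Your proof is correct and follows essentially the same approach as the paper's own proof: both take the section $s_W:X\to{\rm Gr}(E)$ associated to the quotient $W$, compute $s_W^*\mathcal{L}=1$ and $s_W^*[\mathcal{O}_{{\rm Gr}(E)}(1)]=\deg(W)$, and then use the nefness of $\mathcal{M}$ from Lemma~\ref{lem1}(1) to conclude $\deg(W)-\theta\geq 0$. Your write-up is slightly more explicit about the universal property and the identification $s^*\mathcal{O}_{{\rm Gr}(E)}(1)=\det W$, but the argument is the same.
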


\begin{proof}
Let
$$
s_W\, :\, X\, \longrightarrow\, {\rm Gr}(E)
$$
be the section of the projection $f$ in \eqref{e2} defined by $W$. For
the class $\mathcal M$ in Lemma \ref{lem1}(1), we have
$$
s^*_W({\mathcal M})\,=\, s^*_W([{\mathcal O}_{{\rm Gr}(E)}(1)])- \theta\, \geq\, 0
$$
($\text{NS}(X)$ is identified with $\mathbb Z$ using degree),
because $\mathcal M$ is nef by Lemma \ref{lem1}(1) and $s^*_W({\mathcal L})\,=\, 1$.
But $$s^*_W([{\mathcal O}_{{\rm Gr}(E)}(1)])\,=\, \text{degree}(W)\, ,$$ hence we have
$\text{degree}(W) \, \geq\, \theta$.
\end{proof}

The following is a refinement of Corollary \ref{cor1}:

\begin{proposition}\label{lem2}
Let $W$ be a quotient bundle of $E$ of rank $r$. Then exactly one of the following two
is valid:
\begin{enumerate}
\item The quotient $W$ coincides with the quotient $E/E_{m-1}$ in \eqref{e1}.

\item ${\rm degree}(W) \, \geq\, \theta + \mu(E_{m-1}/E_{m-2})- \mu(E_m/E_{m-1})$.
\end{enumerate}
\end{proposition}

\begin{proof}
Since $ \mu(E_m/E_{m-1})\, <\, \mu(E_{m-1}/E_{m-2})$, if statement (2) in the proposition
holds, then statement (1) is not valid. So it suffices to prove that (2) holds assuming that
(1) does not hold.

Consider the quotient bundles $W$ and $E/E_{m-1}$ of $E$ of rank $r$. Let $S$
be the image in $W$ of the subbundle $E_{m-1}\, \subset\, E$. Therefore,
$$
Q\, :=\, W/S
$$
is a quotient of $E/E_{m-1}$. So, we have
\begin{equation}\label{f1}
\text{degree}(W)\,=\, \text{degree}(S)+\text{degree}(Q)\, .
\end{equation}
Assume that $W$ is not the quotient $E/E_{m-1}$ of $E$. This implies that we have
$S\, \not=\, 0$ and $\text{rank}(Q)\, <\, r$. Let $r'$ be the rank of $Q$; so
${\rm rank}(S)\,=\, r-r'\, >\, 0$.

Consider the Harder--Narasimhan filtration of $E$ in \eqref{e1}.
Let $t$ be the smallest positive integer such that $\text{rank}(E/E_{t})
\, <\, r'$. Note that $t\, > \, m-1$ because $r'\, <\, r$.
Let
$$
q_0\, :\, \text{Gr}_{r'}(E/E_{m-1})\,\longrightarrow\, X
$$
be the Grassmann bundle that parametrizes all the $r'$ dimensional quotients of the fibers
of $E/E_{m-1}$. The real N\'eron--Severi class $$[{\mathcal O}_{\text{Gr}_{r'}(E/E_{m-1})}(1)]
-(\text{degree}(E/E_t)+ (r'-\text{rank}(E/E_{t}))\cdot
\mu (E_t/E_{t-1}))\cdot q^*_0 L_1$$
on $\text{Gr}_{r'}(E/E_{m-1})$ is nef (see Proposition \ref{bp41}), where
${\mathcal O}_{\text{Gr}_{r'}(E/E_{m-1})}(1)$ is the tautological line bundle, and
$L_1$ is the class of a line bundle on $X$ of degree $1$. Consequently, evaluating this class on
the section of the projection $q_0$ given by the torsionfree quotient of $Q$ we have
\begin{equation}\label{f2}
\text{degree}(Q) \, \geq\, \text{degree}(Q/{\rm Torsion})
\, \geq\, \text{degree}(E/E_t)+ (r'-\text{rank}(E/E_{t}))\cdot
\mu (E_t/E_{t-1})\, .
\end{equation}

We will next prove that
\begin{equation}\label{f3}
\text{degree}(S)\, \geq\, (r-r')\cdot\mu(E_{m-1}/E_{m-2})\, .
\end{equation}

For this, first note that if the natural surjective homomorphism
$E_{m-1}\, \longrightarrow\, S$ is also injective, then \eqref{f3} holds,
because in that case,
$$
\text{degree}(E_{m-1})\, \geq\, \text{rank}(E_{m-1})\cdot \mu(E_{m-1}/E_{m-2})
\,=\, (r-r')\cdot\mu(E_{m-1}/E_{m-2})\, .
$$
So to prove \eqref{f3} we will assume that $r-r'\, <\, \text{rank}(E_{m-1})$.

Let $u\,\leq\, m-1$ be the smallest nonnegative integer such that
$0\,\leq\,\text{rank}(E_{m-1})-\text{rank}(E_u)\, < \, r-r'$; note that
$u\, \geq\, 1$, because $r-r'\, <\, \text{rank}(E_{m-1})$. Let
$$
q_1\, :\, \text{Gr}_{r-r'}(E_{m-1})\,\longrightarrow\, X
$$
be the Grassmann bundle that parametrizes all the $r-r'$ dimensional quotients
of the fibers of $E_{m-1}$. 

By Proposition \ref{bp41},  the line bundle 
$$
[{\mathcal O}_{\text{Gr}_{r-r'}(E_{m-1})}(1)]- (\text{degree}(E_{m-1}/E_{u}) +
(r-r'-\text{rank}(E_{m-1})+\text{rank}(E_u)) \cdot\mu(E_u/E_{u-1}))\cdot q^*_1 L_1
$$
on $\text{Gr}_{r-r'}(E_{m-1})$ is nef. 
Evaluating this line bundle on the
section of $q_1$ given by $S$ (note that $S$ is a locally free quotient of
$E_{m-1}$ of rank $r-r'$), it follows that
\begin{eqnarray}
\text{degree}(S)\, &\geq&\, \text{degree}(E_{m-1}/E_{u}) +
(r-r'-\text{rank}(E_{m-1})+\text{rank}(E_u))\cdot\mu(E_u/E_{u-1}) \nonumber\\
&=&\, \text{rank}(E_{m-1}/E_{u})\cdot\mu(E_{m-1}/E_{u}) \label{new} \\
&& +~
(r-r'-\text{rank}(E_{m-1})+\text{rank}(E_u))\cdot\mu(E_u/E_{u-1}) \nonumber\\
\, &\geq&\, (\text{rank}(E_{m-1}) - \text{rank}(E_u))\cdot
          \mu(E_{m-1}/E_{m-2}) \label{new1} \\
&&+~ (r-r'-\text{rank}(E_{m-1})
   +\text{rank}(E_u))\cdot\mu(E_{m-1}/E_{m-2}). \nonumber\\
&  = & (r-r')\cdot\mu(E_{m-1}/E_{m-2}) \nonumber.
\end{eqnarray}

The inequality \eqref{new1} above holds because $$\mu(E_{m-1}/E_{m-2})\, \leq\,
\mu(E_{m-1}/E_{u}) \text{~and~}
\mu(E_{m-1}/E_{m-2})\, \leq\, \mu(E_u/E_{u-1}).$$ 

Note that the first
inequality above holds if $u < m-1$. If $u=m-1$ then we directly get
\eqref{f3} from \eqref{new}. 

This completes the proof of the inequality \eqref{f3}. 

%Therefore,
%$$
%\text{degree}(S)\, \geq\, %(\text{rank}(E_{m-1}) - \text{rank}(E_u))\cdot
%\mu(E_{m-1}/E_{m-2})
%$$
%$$
%+ (r-r'-\text{rank}(E_{m-1})+\text{rank}(E_u))\cdot\mu(E_{m-1}/E_{m-2})
%\,=\, 
%(r-r')\cdot\mu(E_{m-1}/E_{m-2}),
%$$
%and the proof of \eqref{f3} is complete.

{}From \eqref{f1}, \eqref{f2} and \eqref{f3} we have
$$
\text{degree}(W)\, \geq\, \text{degree}(E/E_t)+ (r'-\text{rank}(E/E_{t}))\cdot
\mu (E_t/E_{t-1})+ (r-r')\cdot\mu(E_{m-1}/E_{m-2})
$$
$$
=\, \text{degree}(E/E_t)+ (r'-\text{rank}(E/E_{t}))\cdot
\mu (E_t/E_{t-1})+ (r-r')\cdot\mu(E_{m}/E_{m-1})
$$
\begin{equation}\label{z2}
+
(r-r')\cdot(\mu(E_{m-1}/E_{m-2})-\mu(E_{m}/E_{m-1}))\, .
\end{equation}

We will show that
\begin{equation}\label{z1}
\theta\, \leq\, \text{degree}(E/E_t)+
(r'-\text{rank}(E/E_{t}))\cdot
\mu (E_t/E_{t-1})+ (r-r')\cdot\mu(E_{m}/E_{m-1})\, .
\end{equation}

If $t\,=\,m$, then the two sides of \eqref{z1} coincide, because
$(r'-\text{rank}(E/E_{t}))+(r-r')\,=\, \text{rank}(E_{m}/E_{m-1})$. If
$t\,> \,m$, then
$$ 
\theta\,=\,
\text{degree}(E/E_t)+\text{degree}(E_t/E_{t-1})+\text{degree}(E_{t-1}/E_{m-1})
$$
$$
=\,\text{degree}(E/E_t)+(r'-\text{rank}(E/E_t))\mu(E_t/E_{t-1})+(r-r')
\cdot\mu(E_{t-1}/E_{m-1})
$$
$$
+(\text{rank}(E/E_{t-1})-r')\mu(E_t/E_{t-1})+(r'-r+\text{rank}(E_{t-1}/E_{m-1}))
\mu(E_{t-1}/E_{m-1})\, ;
$$
also, $(\text{rank}(E/E_{t-1})-r')\,=\, -(r'-r+\text{rank}(E_{t-1}/E_{m-1}))\,
>\, 0$, and $\mu(E_t/E_{t-1})\,<\, \mu(E_{t-1}/E_{m-1})$, so
$$
(\text{rank}(E/E_{t-1})-r')\mu(E_t/E_{t-1})+(r'-r+\text{rank}(E_{t-1}/E_{m-1}))
\mu(E_{t-1}/E_{m-1})\, <\, 0\, ,
$$
proving \eqref{z1}.

Combining \eqref{z2} and \eqref{z1},
$$
\text{degree}(W)\, \geq\, \theta +(r-r')\cdot(\mu(E_{m-1}/E_{m-2})-\mu(E_{m}/E_{m-1}))
$$
$$
\geq\, \theta +\mu(E_{m-1}/E_{m-2})-\mu(E_{m}/E_{m-1})
$$
because $r-r'\, \geq\, 1$ and $\mu(E_{m-1}/E_{m-2})\,>\, \mu(E_{m}/E_{m-1})$.
This completes the proof.
\end{proof}

Write $\theta$ in \eqref{e3} as
$$
\theta\,=\, s_0 + \alpha \cdot r\, ,
$$
where $\alpha\, \in\, \mathbb Z$ and $-r\, \leq\, s_0\,<\, 0$. Let $L$
be a line bundle
on $X$ of degree $-\alpha$ and consider $F\,:=\, E\otimes L$. Note that $\theta_{F,r}\,=\, 
s_0$ (see \eqref{et}).

As mentioned before, given a vector bundle $U'$ and a line bundle $L'$ on $X$, the
Harder--Narasimhan filtration of $U'\otimes L'$ is
the Harder--Narasimhan filtration of $U'$ tensored with the line bundle $L'$. In particular,
the Harder--Narasimhan filtration for $F$ is the filtration in \eqref{e1} tensored with $L$.

As noted before, the Grassmann bundles for $E$ and $F$ are identified.

Therefore, substituting $E$ by $E\otimes L$ we can assume that $-r\,\leq\, \theta\, <\, 0$. 
Henceforth without loss of generality it may be assumed that 
\begin{equation}\label{theta} 
-r\,\leq\, \theta\, <\, 0\, .
\end{equation}

Let $Y$ be an irreducible smooth complex projective curve and
$$
\phi\, :\, Y\, \longrightarrow\, X
$$
a surjective morphism.

\begin{corollary}\label{cor2}
Assume that
\begin{equation}\label{as}
\mu(E_m/E_{m-1})-\mu(E_{m-1}/E_{m-2})\, \leq\, \theta\, <\, 0\, .
\end{equation}
Take any quotient
bundle $Q\, \longrightarrow\, Y$ of rank $r$ of $\phi^*E$. Then exactly one of the following 
is valid:
\begin{enumerate}
\item The quotient $Q$ coincides with the quotient $\phi^*(E/E_{m-1})\,=\, (\phi^*E)/(\phi^*E_{m-1})$.

\item ${\rm degree}(Q) \, \geq\, 0$.
\end{enumerate}
\end{corollary}

\begin{proof}
If $U$ is a semistable vector bundle on $X$, then the vector bundle $\phi^*U$ on $Y$ is
also semistable \cite[p.~441, Theorem 2.4]{BS}. (Note that this statement
for vector bundles on curves is much easier to prove than the version in \cite{BS}
which is for all dimensions.) Also, we have
$$\mu(\phi^*U) \, =\, \text{degree}(\phi)\cdot \mu^*(U)\, .$$ From these two facts it follows
immediately that the Harder--Narasimhan filtration of $\phi^*U$ is the pullback, via $\phi$, of
the Harder--Narasimhan filtration of $U$. In particular, the
Harder--Narasimhan filtration of $\phi^*E$ is the pullback, via $\phi$, of the filtration
in \eqref{e1}. Therefore, the result follows from Proposition \ref{lem2} and the
conditions on $\theta$ in \eqref{as}.
\end{proof}

\section{Seshadri constants}\label{sc}

In this section, we will use the results in Section \ref{grassmann} to
compute Seshadri constants of ample line bundles on $\text{Gr}(E)$. 

We quickly recall our set-up. Let $X$ be a smooth complex projective curve, and let 
$E$ be a vector bundle on $X$ of rank $n$ which is not semistable. We have its
Harder-Narasimhan filtration in \eqref{e1}. As in Section \ref{grassmann}, we fix an 
integer $1 \,<\, m \,\le\, d$ and set $r \,:=\, {\rm rank}(E/E_{m-1})$.
Let $f\, :\, \text{Gr}(E)\,\longrightarrow\, 
X$ be the Grassmann bundle that parametrizes the quotients of the fibers of $E$ of 
dimension $r$.

In order to compute the Seshadri constants of ample line bundles on
$\gr$, we will first describe its closed cone of curves ${\rm \overline{NE}}(\gr)$ (the dual
of the nef cone of $\gr$).

Let $\Gamma_l$ denote the class of a line in a fiber $Y\, :=\, f^{-1}(x_0)$.
Note that $Y$ is isomorphic to the Grassmannian variety of
$r$-dimensional quotients of an $n$-dimensional vector space. 
Let $\Gamma_s$ denote the image of the section 
$$s\,:\, X \,\longrightarrow\, \gr$$ corresponding to the rank $r$ quotient $E
\,\longrightarrow\, E/E_{m-1}$. 

We claim that ${\rm \overline{NE}}(\gr)$ is the cone spanned by the
classes of $\Gamma_s$ and $\Gamma_l$. This can be deduced from the facts that
${\rm \overline{NE}}(\gr)$ is the dual cone to the nef cone of $\gr$,
and the nef cone of $\gr$ is generated by $\mathcal{L}$ and $\mathcal{M}$
by Lemma \ref{lem1}(1). Indeed, it is easy to see that $\mathcal{L}\cdot
\Gamma_l \,=\, 0$ and $\mathcal{L}\cdot \Gamma_s \,=\, 1$. On the other hand,
$\str_{\gr}(1) \cdot \Gamma_s \,=\, \theta$, since by definition, 
$\theta \,=\,{\rm degree}(E/E_{m-1})$ (see \eqref{e3}). It is also clear
that $\str_{\gr}(1) \cdot \Gamma_l\,=\,1$. Now recall that $\mathcal{M} \,=\,
[\str_{\gr}(1)]-\theta \mathcal{L}$. This gives that $\mathcal{M} \cdot
\Gamma_s \,=\, 0$ and $\mathcal{M} \cdot
\Gamma_l \,= \,1$. It now follows that ${\rm \overline{NE}}(\gr)$ is the cone
spanned by the classes of $\Gamma_s$ and $\Gamma_l$, proving the claim.

We will now prove two independent results on Seshadri constants of
ample line bundles on $\gr$. These results make different
assumptions on the Harder-Narasimhan filtration \eqref{e1} of $E$.

\begin{theorem}\label{sc1}
Assume that there is an
integer $c\, <\, d$ such that ${\rm rank}(E_c)\,=\, r$
and that 
${\rm degree}(E_c)$ is an integral multiple of $r$. 
Let $L$ be an ample line bundle on ${\rm Gr}(E)$ which is 
numerically equivalent to $\mathcal{L}^{\otimes a} \otimes
\mathcal{M}^{\otimes b}$, 
where $\mathcal{L}$ and
$\mathcal{M}$ are as in Lemma \ref{lem1}(1). Then the Seshadri
constants of $L$ are given by the following: 
\begin{enumerate}
\item $\varepsilon(L,x) \,\ge\, {\rm min}\{a,\,b\}$, for all $x \,\in\,
{\rm Gr}(E)$. 

\item If $b \,\le\, a$, then $\varepsilon(L,x)\,=\, b$, for all $x\,\in\,
{\rm Gr}(E)$.

\item If $a \,<\, b$, then:
\begin{enumerate}
\item[(i)] if $x$ does not belong to the base locus of the linear system
$|\str_{\gr}(1)|$, then $\varepsilon(L,x) \,=\, b$; 

\item[(ii)] if $x$ belongs to the base locus of $|\str_{\gr}(1)|$, 
then $a\,\le\,\varepsilon(L,x) \,\le\, b$; 

\item[(iii)] if $x \,\in\, \Gamma_s$, then $\varepsilon(L,x)\,=\,a$. 
\end{enumerate}
\end{enumerate}
\end{theorem}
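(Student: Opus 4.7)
My approach is to squeeze $\varepsilon(L,x)$ between the upper bounds furnished by the two extremal curves $\Gamma_l$ and $\Gamma_s$ and a lower bound obtained from a uniform multiplicity estimate on an arbitrary irreducible curve. Since $L\equiv a\mathcal{L}+b\mathcal{M}$ is ample and the nef cone of $\gr$ is generated by $\mathcal{L}$ and $\mathcal{M}$ by Lemma~\ref{lem1}(1), both $a$ and $b$ are strictly positive. The fiber $f^{-1}(f(x))\cong\text{Gr}(r,n)$ of any $x$ contains a line $\Gamma_l$, yielding $\varepsilon(L,x)\leq L\cdot\Gamma_l = b$; and for $x\in\Gamma_s$ the smoothness of the section curve $\Gamma_s\cong X$ gives $\varepsilon(L,x)\leq L\cdot\Gamma_s = a$.

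For the universal lower bound $\varepsilon(L,x)\geq\min\{a,b\}$ of claim~(1), I would take an irreducible reduced curve $C\subset\gr$ through $x$ and write its class in ${\rm \overline{NE}}(\gr)$ as $\alpha[\Gamma_s]+\beta[\Gamma_l]$ with $\alpha=\mathcal{L}\cdot C$ and $\beta=\mathcal{M}\cdot C$ non-negative integers, so $L\cdot C = a\alpha+b\beta$. Two multiplicity estimates now suffice. If $C$ lies in a fiber, then $\alpha=0$ and $\beta=\str_{\gr}(1)\cdot C$ is the Pl\"ucker degree of $C$ in the Grassmannian, so $\text{mult}_x C\leq\beta$ by the standard fact that multiplicity is bounded by degree for curves in projective space, giving $L\cdot C/\text{mult}_x C\geq b$. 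If instead $C$ surjects onto $X$, then intersecting $C$ with the smooth divisor $f^{-1}(f(x))$ at $x$ gives $\text{mult}_x C\leq\mathcal{L}\cdot C=\alpha$, so $L\cdot C/\text{mult}_x C\geq a+b\beta/\alpha\geq a$. This proves~(1); combining with the upper bounds from the previous paragraph delivers (2) (when $b\leq a$, both bounds collapse to $b$), (3)(ii) (the inequalities $a\leq\varepsilon(L,x)\leq b$ hold in particular on the base locus), and (3)(iii) (on $\Gamma_s$ the upper and lower bounds meet at $a$).

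The substantive case is (3)(i). Here I would use the hypothesis $x\notin\text{Bs}|\str_{\gr}(1)|$ to select a section $s$ of $\str_{\gr}(1)$ with $s(x)\neq 0$. For any curve $C$ through $x$ not contained in a fiber, $C\not\subset(s)$ since $x\in C\setminus(s)$, hence $\str_{\gr}(1)\cdot C\geq 0$; rewriting this as $\theta\alpha+\beta\geq 0$ gives $\beta\geq|\theta|\alpha$. The crucial input now is the normalization~\eqref{theta}, which forces $\theta$ to be a \emph{negative integer}, so $|\theta|\geq 1$ and therefore $\beta\geq\alpha$. Consequently $L\cdot C/\text{mult}_x C\geq a+b\beta/\alpha\geq a+b$, and combined with the fiber case and the upper bound $\leq b$ this forces $\varepsilon(L,x)=b$. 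The main obstacle lies in identifying the right test divisor for (3)(i): it is \emph{not} Proposition~\ref{prop1}'s distinguished effective divisor $D$ (which lives in the shifted class $[\str_{\gr}(1)]-\zeta\mathcal{L}$), but rather any section of $\str_{\gr}(1)$ itself, and the whole argument rests on the integrality $|\theta|\geq 1$ supplied by~\eqref{theta}.
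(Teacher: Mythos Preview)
Your argument is correct step for step and is exactly the paper's approach: bound multiplicity by fibre degree or by $\mathcal{L}\cdot C$ according to whether $C$ is vertical or horizontal, and for (3)(i) use a member of $|\str_{\gr}(1)|$ through which $C$ does not pass to force $\str_{\gr}(1)\cdot C\ge 0$, hence $\beta\ge(-\theta)\alpha\ge\alpha$.

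The one place you go wrong is the closing commentary. You invoke the normalization~\eqref{theta} and then assert that the test divisor in (3)(i) ``is \emph{not} Proposition~\ref{prop1}'s distinguished effective divisor $D$ \ldots\ but rather any section of $\str_{\gr}(1)$ itself.'' This has it backwards. Under the normalization~\eqref{theta} there is no guarantee that $|\str_{\gr}(1)|$ is non-empty at all (e.g.\ take $E_1$ a nontrivial degree-zero line bundle in the rank-two case), so your (3)(i) could be vacuous and the theorem would lose its content. The paper instead normalizes via Proposition~\ref{prop1}, making $\zeta=0$ and $\bigwedge^r E_c\cong\str_X$; it is precisely this that forces $\str_{\gr}(1)$ to be effective, and Proposition~\ref{prop1}'s divisor $D$ \emph{is} then a section of $\str_{\gr}(1)$ (the ``shift'' by $\zeta\mathcal{L}$ has become zero). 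This is exactly where the hypothesis that $\deg(E_c)$ is a multiple of $r$ enters, and you have not used it. Once you switch to the Proposition~\ref{prop1} normalization, $\theta$ is still a negative integer, so your inequality $|\theta|\ge 1$ survives and the rest of your proof goes through unchanged, matching the paper's.
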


\begin{proof}
We first prove statements (1) and (2), which, in fact, hold in general, without
the assumption on $E_c$. 

Let $C\,\subset\,\gr$ be an irreducible and reduced curve passing
through $x$. Let $m$ denote the multiplicity of $C$ at $x$. We
consider two cases. 

{\bf \underline{Case 1}}: $C$ is contained in a fiber of $f$. 
\smallskip

In this case, as an element of ${\rm \overline{NE}}(\gr)$, this $C$ is
given by $n_l\cdot\Gamma_l$ for a positive integer
$n_l$. In other words, $C$ is a curve of degree $n_l$ in a fiber of
$f$ which is isomorphic to the Grassmannian of $r$-dimensional
quotients of an $n$-dimensional vector space. Since $C$ contains a
point $x$ of multiplicity $m$, its degree is at least $m$, i.e., $n_l
\,\ge\, m$.

We then have $\frac{L\cdot C}{m}\,=\, \frac{bn_l}{m}\,\ge\, b$. 

\bigskip

{\bf \underline{Case 2}}: $C$ is not contained in a fiber of $f$. 
\smallskip

In this case, as an element of ${\rm \overline{NE}}(\gr)$, this $C$ is
given by $n_l\Gamma_l + n_s \Gamma_s$, for non-negative
integers $n_s, n_l$. Let $Y$ be the fiber of $f$
passing through $x$. Since $C \,\not\subset\, Y$, B\'ezout's theorem
gives us the inequality $$Y\cdot C \,=\, \mathcal{L}\cdot C \,\ge\,
\text{mult}_x C \,=\, m\, .$$ So we have $n_s \,\ge\, m$.

Now the Seshadri ratio is given by $\frac{L\cdot C}{m} \,=\,
\frac{an_s+bn_l}{m} \,\ge\, a + \frac{bn_l}{m} \,\ge\, a$. 

By Cases 1 and 2, we have that $\frac{L\cdot C}{m}\,\ge\, 
\text{min}\{a,\, b\}$, for all $x \,\in\, \gr$. This proves (1). Further, note 
that for every point $x \,\in\, \gr$, there exists a line $l$
in a fiber of $f$ that contains $x$. The Seshadri ratio for
$l$ is $\frac{L\cdot l}{1} \,=\, b$. This means that $\varepsilon(L,x)\,\le\,
b$ for all $x \,\in\, \gr$. So if $b \,\le\, a$, then 
$\varepsilon(L,x)\,=\,b$,
giving (2). 

Now we use the hypothesis of the theorem to prove (3). Because of our hypothesis, 
we can apply Proposition \ref{prop1}. 
We normalize $E$ as in the proof 
of Proposition \ref{prop1}. 
In other words, we replace $E$ by $E \otimes L$, where $L$ is a
line bundle on $X$ such that $L^{\otimes r}\,=\, \bigwedge^r (E_c)^*$
(recall that $\text{degree}(E_c)$ is a multiple of $r$, by hypothesis). Then, as argued in the
proof of Proposition \ref{prop1}, it follows that 
$\bigwedge\nolimits^r E_c \,=\, \str_X$. So $\zeta :=
\text{degree}(E_c) \,=\, 0$ and $\theta \,=\, {\rm degree}(E/E_{m-1}) \,<\, 0$. 
Further, by Proposition \ref{prop1}, 
$\str_{\gr}(1)$ is
effective. 

Now we prove (3)(i). So suppose that $x$ is not in the base 
locus of the linear system
$|\str_{\gr}(1)|$. Then $C$ is also not contained in the base locus of 
$|\str_{\gr}(1)|$. This implies that $\str_{\gr}(1) \cdot C\,\ge\,
0$. 
Suppose that as an element of ${\rm \overline{NE}}(\gr)$, the curve $C$ is
given by $n_l\Gamma_l + n_s \Gamma_s$, where $n_s,\, n_l$ are
non-negative integers. Then from the observation that
$\str_{\gr}(1) \cdot C\,\ge\, 0$ it follows that $n_l \,\ge\, n_s(-\theta)\,\ge\, n_s$. 

If $C$ is contained in a fiber of $f$, then by Case 1, we have
$\frac{L\cdot C}{m}\,\ge\, b$. If $C$ is not contained in a fiber of 
$f$, then we have $n_s \,\ge\, m$. So $n_l\,\ge\, n_s \,\ge\, m$. Hence 
$\frac{L\cdot C}{m} \,=\,
\frac{an_s+bn_l}{m} \,\ge\, a + b \ge b$. This completes the proof of
(3)(i).

For (3)(ii), we have $\varepsilon(L,x) \ge a$, by part (1) of the
theorem. Also, as noted above, for every point
$x \,\in\, \gr$, there exists a line $l$
in a fiber of $f$ that contains $x$ whose Seshadri ratio is given by
$\frac{L\cdot l}{1} \,=\, b$. This means that $\varepsilon(L,x)\,\le\,
b$ for all $x \,\in\, \gr$. So (3)(ii) follows.

For (3)(iii),
note that $\Gamma_s$ is smooth and $L\cdot \Gamma_s \,=\, a$. 
\end{proof}

\begin{corollary}\label{cor-sc1}
Assume the hypotheses of Theorem \ref{sc1} hold. 
Let $L \,=\, \mathcal{L}^{\otimes a} \otimes \mathcal{M}^{\otimes b}$ be
an ample line bundle on ${\rm Gr}(E)$, where $\mathcal{L}$ and
$\mathcal{M}$ are as in Lemma \ref{lem1}(1). Then we have 
\begin{enumerate}
\item $\varepsilon(L,1) \,= \, b$, and
\item $\varepsilon(L) \,= \,{\rm min}\{a, \, b\}$.
\end{enumerate}
\end{corollary}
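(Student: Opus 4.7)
The plan is to deduce the corollary directly from Theorem \ref{sc1} by taking supremum/infimum of the point-wise values and using Proposition \ref{prop1} to control the base locus of $|\mathcal{O}_{{\rm Gr}(E)}(1)|$ when needed.

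First I would record the universal upper bound that is implicit in the proof of Theorem \ref{sc1}: for every $x\in{\rm Gr}(E)$ there is a line $l$ in a fiber of $f$ passing through $x$ with $L\cdot l=b$, so $\varepsilon(L,x)\le b$ for every $x$. Combined with Theorem \ref{sc1}(1), this already pins both Seshadri invariants into the interval $[\min\{a,b\},\,b]$.

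For part (1), I would split into the two regimes of Theorem \ref{sc1}. If $b\le a$, Theorem \ref{sc1}(2) gives $\varepsilon(L,x)=b$ identically, so $\varepsilon(L,1)=b$. If $a<b$, Theorem \ref{sc1}(3)(i) shows that $\varepsilon(L,x)=b$ at every point outside the base locus of $|\mathcal{O}_{{\rm Gr}(E)}(1)|$, so it suffices to exhibit at least one such point. This is exactly where Proposition \ref{prop1} enters: after the normalization used in Theorem \ref{sc1} the class $[\mathcal{O}_{{\rm Gr}(E)}(1)]-\zeta\cdot\mathcal{L}$ becomes effective and supports a \emph{unique} effective divisor $D$, so the base locus of $|\mathcal{O}_{{\rm Gr}(E)}(1)|$ is contained in (indeed equals) the proper closed subset $D\subsetneq{\rm Gr}(E)$. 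Picking any $x\notin D$ therefore yields $\varepsilon(L,x)=b$, and combined with the universal upper bound we conclude $\varepsilon(L,1)=b$.

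For part (2), I would again split on $b\le a$ versus $a<b$. In the first case Theorem \ref{sc1}(2) immediately gives $\varepsilon(L)=b=\min\{a,b\}$. In the second case, the lower bound $\varepsilon(L)\ge\min\{a,b\}=a$ follows from Theorem \ref{sc1}(1), while Theorem \ref{sc1}(3)(iii) furnishes the matching upper bound by evaluating at any $x\in\Gamma_s$, where $\varepsilon(L,x)=a$. Hence $\varepsilon(L)=a=\min\{a,b\}$ in all cases. There is essentially no new obstacle: the only mildly delicate point is exhibiting a point outside the base locus of $|\mathcal{O}_{{\rm Gr}(E)}(1)|$ in the case $a<b$ for part (1), and Proposition \ref{prop1} is tailor-made to supply this.
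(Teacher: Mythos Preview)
Your proposal is correct and follows essentially the same approach as the paper's own proof: both split according to whether $b\le a$ or $a<b$ and read off the supremum and infimum directly from the pointwise values in Theorem~\ref{sc1}. You are slightly more explicit than the paper in justifying, via Proposition~\ref{prop1}, that the base locus of $|\mathcal{O}_{{\rm Gr}(E)}(1)|$ is a proper closed subset (so that a point outside it exists), whereas the paper leaves this implicit.
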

\begin{proof}
Recall that $\varepsilon(L,1)$ is the supremum of $\varepsilon(L,x)$ as
$x$ varies in $X$, while $\varepsilon(L)$ is the infimum of 
$\varepsilon(L,x)$ as $x$ varies in $X$ (see Section \ref{se1}).

If $b \le a$, then Theorem \ref{sc1} implies that 
$\varepsilon(L,x) = b$ for all $x$. 
So we have $\varepsilon(L,1) = \varepsilon(L) = b$. 
On the other hand, if $b > a$, then $\varepsilon(L,x)= b$ for a point
$x$ outside the base locus of $|\str_{\gr}(1)|$
and $a \le \varepsilon(L,x) \le b$ if $x$ is in the base locus of 
$|\str_{\gr}(1)|$. Note also that 
$\varepsilon(L,x)= a$ if $x \in \Gamma_s$. So the corollary
follows. 
\end{proof}

For our next result about Seshadri constants, we will make use of
Corollary \ref{cor2}. We also normalize $E$ as in \eqref{theta}. 

\begin{theorem}\label{sc2}
Assume that $\mu(E_m/E_{m-1})-\mu(E_{m-1}/E_{m-2})\, \leq\, \theta$.
Let $L$ be an ample line bundle on ${\rm Gr}(E)$ which is 
numerically equivalent to $\mathcal{L}^{\otimes a} \otimes
\mathcal{M}^{\otimes b}$, 
where $\mathcal{L}$ and
$\mathcal{M}$ are as in Lemma \ref{lem1}(1). 
The Seshadri
constant of $L$ at $x\, \in\, {\rm Gr}(E)$ is given by the following: 
\begin{equation*}
\varepsilon(L,x) \,=\, 
\begin{cases}
b & \text{~if~} b \le a \text{~or~} x \notin \Gamma_s, \\
a & \text{~if~} a < b \text{~and~} x \in \Gamma_s\, .
\end{cases}
\end{equation*}
\end{theorem}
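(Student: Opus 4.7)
The strategy parallels that of Theorem \ref{sc1}, using Corollary \ref{cor2} in place of the effectivity argument of Proposition \ref{prop1}. Fix $x\in\gr$ and let $C$ be an irreducible reduced curve through $x$. Since $\overline{\text{NE}}(\gr)$ is generated by $\Gamma_s$ and $\Gamma_l$ (established before Theorem \ref{sc1}), write $[C]=n_s[\Gamma_s]+n_l[\Gamma_l]$ with $n_s,n_l\ge 0$. From $\mathcal{L}\cdot\Gamma_s=1$, $\mathcal{L}\cdot\Gamma_l=0$, $\mathcal{M}\cdot\Gamma_s=0$, $\mathcal{M}\cdot\Gamma_l=1$ one obtains $L\cdot C=an_s+bn_l$ and $\str_{\gr}(1)\cdot C=n_l+\theta n_s$. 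The easy upper bounds come from curves already at hand: a line in the fiber $f^{-1}(f(x))$ gives $\varepsilon(L,x)\le b$ for every $x$, and the smooth section $\Gamma_s$ gives $\varepsilon(L,x)\le a$ when $x\in\Gamma_s$.

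I would then split the lower-bound analysis into three cases. (A) If $C$ is contained in the fiber through $x$, then $n_s=0$ and $n_l$ is the degree of $C$ in the Grassmannian fiber, whence $\text{mult}_xC\le n_l$ and the Seshadri ratio is $\ge b$. (B) If $C=\Gamma_s$ the ratio equals $a$. (C) Otherwise $C$ is not in a fiber and $C\ne\Gamma_s$; the morphism $\phi\colon\tilde C\to X$ induced by $f$ on the normalization is then dominant, and pulling back the universal quotient on $\gr$ via the lift $\tilde C\to\gr$ produces a rank-$r$ quotient bundle $Q_C$ of $\phi^*E$ of degree $\str_{\gr}(1)\cdot C=n_l+\theta n_s$.

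The crux, and the main obstacle, is to verify that $Q_C$ is not isomorphic to $\phi^*(E/E_{m-1})$, so that alternative (1) of Corollary \ref{cor2} is excluded. This is the universal property of the Grassmann bundle: if $Q_C=\phi^*(E/E_{m-1})$, then the classifying morphism $\tilde C\to\gr$ factors through the section $s$, forcing the image $C$ to be contained in (hence equal to) $\Gamma_s$, a contradiction. Corollary \ref{cor2} (whose hypothesis is guaranteed by the assumption of the theorem and the normalization \eqref{theta}) therefore yields $\deg Q_C\ge 0$, i.e.\ $n_l\ge -\theta n_s$. Since $\theta$ is a negative integer we have $-\theta\ge 1$, so $n_l\ge n_s$; combined with the B\'ezout inequality $\text{mult}_xC\le\mathcal{L}\cdot C=n_s$ this produces $\frac{L\cdot C}{\text{mult}_xC}\ge a+b(-\theta)\ge a+b$.

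Assembling the three cases: if $x\notin\Gamma_s$ only (A) and (C) can occur and both give ratios $\ge b$, matching the upper bound $b$, so $\varepsilon(L,x)=b$. If $x\in\Gamma_s$ and $b\le a$, then (A) gives $\ge b$, (B) gives $a\ge b$, and (C) gives $\ge a+b$; the minimum $b$ is realized by the fiber line, so $\varepsilon(L,x)=b$. Finally, if $x\in\Gamma_s$ and $a<b$, (B) realizes $a$ while (A) and (C) give $\ge b>a$ and $\ge a+b>a$ respectively, so $\varepsilon(L,x)=a$. These are exactly the values asserted by the theorem.
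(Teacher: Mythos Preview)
Your argument is correct and follows essentially the same route as the paper's proof: the same three-case split (fiber curve, $\Gamma_s$, neither), the same use of Corollary~\ref{cor2} in the third case to obtain $\str_{\gr}(1)\cdot C\ge 0$ and hence $n_l\ge(-\theta)n_s\ge n_s$, and the same B\'ezout inequality $n_s\ge\text{mult}_xC$ to finish. Your treatment is in fact a little more explicit than the paper's at the one point where the paper is terse---namely in spelling out, via the normalization $\tilde C\to C$ and the universal property of the Grassmann bundle, exactly why $C\ne\Gamma_s$ excludes alternative~(1) of Corollary~\ref{cor2}---but this is a matter of exposition, not of strategy.
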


\begin{proof}
We normalize $E$ as in \eqref{theta}. Then 
$\theta \,=\, {\rm degree}(E/E_{m-1})$ is negative.

Note that, if $C$ is an irreducible curve in $\gr$, not contained in a fiber of $f$, then the restriction of $f$ to $C$ 
defines a surjective morphism $\phi:Y\to X$, where $Y$ is the normalization of $C$. Moreover, the embedding of $C$ in $\gr$ defines a 
rank $r$ quotient $Q$ of $\phi^{\star}(E)$. Conversely, for any surjective morphism $\phi: Y\to X$ with $Y$ smooth and irreducible, a rank $r$ quotient of $\phi^*(E)$ defines a morphism $Y\to \gr$ and hence a curve $C$ in $\gr$, not contained in a fiber of $f$. In particular, the section
$\Gamma_s$, by definition, is given by the rank $r$ quotient 
 $E \,\longrightarrow\, E/E_{m-1}$.

Since $\mu(E_m/E_{m-1})-\mu(E_{m-1}/E_{m-2})\, \leq\, \theta$ by
hypothesis, we can apply Corollary \ref{cor2}. This corollary says
that if $C$ is a curve on $\gr$ given by a rank $r$ quotient $\phi^*(E) \to
Q$, then either $Q$ coincides with the quotient 
$E \,\longrightarrow\, E/E_{m-1}$ or $\text{degree}(Q) \ge 0$. In
other words, either $C = \Gamma_s$ or $\str_{\gr}(1) \cdot C \,\ge \, 0$.

Let $C \,\subset\, \gr$ be an irreducible and reduced curve passing
through $x$. Let $m$ denote the multiplicity of $C$ at $x$. We
consider three different cases. 

{\bf \underline{Case 1}}: $C$ is contained in a fiber of $f$. 
\smallskip

In this case, we have $\frac{L\cdot C}{m}\, \ge\, b$. 
The proof is exactly the same as in Case 1 in the proof of Theorem \ref{sc1}.

\bigskip

{\bf \underline{Case 2}}: $C\,=\,\Gamma_s$.
\smallskip

Note that $\Gamma_s$ is a smooth curve, so that $m\,=\,1$. Hence the Seshadri
ratio is given by $\frac{L\cdot \Gamma_s}{1} \,=\, a$. 

\bigskip

{\bf \underline{Case 3}}: $C \,\ne\, \Gamma_s$ and $C$ is not contained in a fiber of 
$f$.
\smallskip

By Corollary \ref{cor2}, we have $\str_{\gr}(1) \cdot C \,\ge\, 0$. 

As before, as an element of ${\rm \overline{NE}}(\gr)$, the curve $C$ is
given by $n_l\Gamma_l + n_s \Gamma_s$, for non-negative
integers $n_s, n_l$. The same argument as for Case 2 in the
proof of Theorem \ref{sc1} now gives $n_l \,\ge\,
n_s(-\theta)\,\ge\, n_s \ge m$, and hence $\frac{L\cdot C}{m}\,=\,
\frac{an_s+bn_l}{m} \,\ge\, a+b \,\ge\, b$. 

The above analysis shows that if $C \,\ne\, \Gamma_s$, then the Seshadri
ratio $\frac{L\cdot C}{\text{mult}_x C}$ is at least $b$. Note that
given any point $x \,\in\, \gr$, there exists a line $l$ passing through $x$
lying in the fiber containing $x$. Since $l$ is smooth and it is given by
$\Gamma_l$ in the cone of curves of $\gr$, the Seshadri ratio
is $\frac{L\cdot l}{1}\,=\, L\cdot \Gamma_l \,=\, b$. 
Thus the Seshadri constant of $L$ at $x$
is equal to $b$, except when $a \,<\, b$ and $x \,\in\, \Gamma_s$, in which
case $\varepsilon(L,x) \,= \,a$. 

This completes the proof of the theorem.
\end{proof}

\begin{corollary}\label{cor-sc2}
Assume the hypotheses of Theorem \ref{sc2} hold. 
Let $L \,=\, \mathcal{L}^{\otimes a} \otimes \mathcal{M}^{\otimes b}$ be
an ample line bundle on ${\rm Gr}(E)$, where $\mathcal{L}$ and
$\mathcal{M}$ are as in Lemma \ref{lem1}(1). Then we have 
\begin{enumerate}
\item $\varepsilon(L,1) \,= \, b$, and
\item $\varepsilon(L) \,= \,{\rm min}\{a, \, b\}$.
\end{enumerate}
\end{corollary}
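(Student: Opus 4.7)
The plan is to deduce this corollary directly from Theorem \ref{sc2} by taking the supremum and infimum over $x \in \mathrm{Gr}(E)$ of the pointwise values that Theorem \ref{sc2} supplies, and to split into the two cases $b \le a$ and $a < b$. Recall that $\varepsilon(L,1) := \sup_{x} \varepsilon(L,x)$ and $\varepsilon(L) := \inf_{x} \varepsilon(L,x)$, so everything reduces to reading off the extrema of the piecewise formula in Theorem \ref{sc2}.

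First I would handle the case $b \le a$. Here Theorem \ref{sc2} tells us that $\varepsilon(L,x) = b$ at every point $x \in \mathrm{Gr}(E)$, so both the supremum and the infimum equal $b$, and since $b = \min\{a,b\}$ in this case, both claimed equalities hold simultaneously. Next I would handle the case $a < b$. Here Theorem \ref{sc2} gives $\varepsilon(L,x) = b$ for all $x \notin \Gamma_s$ and $\varepsilon(L,x) = a$ for all $x \in \Gamma_s$. Since $\Gamma_s$ is a proper subset of $\mathrm{Gr}(E)$ (its complement is non-empty because $\dim \mathrm{Gr}(E) \ge 2$), the value $b$ is attained off $\Gamma_s$, so the supremum is $b$; meanwhile the value $a < b$ is attained on $\Gamma_s$, so the infimum is $a = \min\{a,b\}$.

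Combining the two cases yields $\varepsilon(L,1) = b$ and $\varepsilon(L) = \min\{a,b\}$, as claimed. There is no real obstacle here: the substance of the argument is entirely inside Theorem \ref{sc2}, and the corollary is simply the bookkeeping that extracts the global invariants $\varepsilon(L,1)$ and $\varepsilon(L)$ from the pointwise description. The only very mild subtlety worth noting is that in the case $a < b$ one must observe that $\mathrm{Gr}(E) \setminus \Gamma_s$ is non-empty so that the supremum $b$ is genuinely attained, but this is immediate since $\Gamma_s$ is the image of a single section of $f$ and $\mathrm{Gr}(E)$ has positive-dimensional fibers.
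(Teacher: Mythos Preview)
Your proof is correct and follows essentially the same approach as the paper, which simply states that the argument is analogous to that of Corollary~\ref{cor-sc1} and follows directly from Theorem~\ref{sc2}. Your explicit case split $b\le a$ versus $a<b$ and the observation that $\mathrm{Gr}(E)\setminus\Gamma_s$ is nonempty make the deduction from Theorem~\ref{sc2} fully transparent.
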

\begin{proof}
The proof is very similar to the proof of Corollary \ref{cor-sc1} and
follows easily from Theorem \ref{sc2}.
\end{proof}

\section{Remarks and Examples} 

\begin{remark}\label{compare-rank2}
As mentioned in Section \ref{se1}, several authors have studied Seshadri constants on 
$\gr$ when $E$ has rank 2 (see \cite{F,S}). In this remark, we compare our result in
Theorem \ref{sc2} with known results in this case. 

Let $X$ be a smooth complex projective curve and let $E$ be a rank 2 vector bundle 
on $X$. Assume that $E$ is normalized in the sense of \cite[Chapter V, Section 
2]{Har-AG}. In other words, $H^0(X,E) \,\ne\, 0$, but $H^0(X,\, E\otimes L)\,=\, 
0$ for any line bundle $L$ on $X$ with ${\rm degree}(L) \,<\, 0$.

In \cite[Chapter V, Section 2]{Har-AG}, the invariant 
$e$ of $\gr$ is defined as $e: \,=\, -\text{degree}(E)$, after normalizing $E$ as above. 
Very precise results on Seshadri constants are 
known when $e \,>\, 0$ (see \cite{F,S}); Theorem \ref{sc2} recovers these 
results.

We first claim that $E$ is not semistable if and only if $e > 0$
which in turn is equivalent to ${\rm degree}(E)
\, <\, 0$, by definition of $e$. 

Indeed, if $H^0(X,\,E) \,\ne \,0$, then $\str_X$ is a
subsheaf of $E$ of slope zero, so if $\text{degree}(E) \,<\, 0$, then $E$
can not be semistable.
On the other hand, if $E$ is not semistable,
consider the Harder-Narasimhan filtration \eqref{e1} of $E$: 
$$0\,=\, E_0\, \subset\, E_1\,\subset\, E_2 \,=\, E\, ,$$ where $E_1$ is a
sub-line-bundle of $E$ with maximal degree. Then ${\rm degree}(E_1) \, =\, 0$. For,
since $E$ has nonzero sections, ${\rm degree}(E_1) \, \ge\, 0$. But 
as $E$ has no nonzero sections after tensoring with any negative degree line
bundle, degree of $E_1$ has to be zero. Now it follows that ${\rm degree}(E) \,<\,
0$, since $E_1$ destabilizes $E$. 

So this is a special case of our set-up with $r \,=\, 1 \,=\, \text{rank}(E/E_1)$.
By \eqref{e3}, we have $\theta \,=\, \text{degree}(E/E_1) =
\text{degree}(E) < 0$. Further note that the hypothesis in Theorem
\ref{sc2} is satisfied. In fact, we have an equality 
$\mu(E_2/E_1)-\mu(E_1/E_0) \,= \,\theta$. So Theorem \ref{sc2} applies
and we obtain precise values of Seshadri constants for any ample line
bundle on $\gr$. 

\end{remark}

We now give four examples which satisfy the hypotheses of either Theorem
\ref{sc1} or Theorem \ref{sc2}. As far as we know, 
in all four examples our results give new computations of Seshadri constants. 
In our examples, $\text{rank}(E)=4$.

\begin{example}\label{ex1}
Let $X$ be a smooth complex projective curve. Let $L$ be a line bundle
of degree -1 on $X$, and set $E \,=\, L \oplus \str_X^{\oplus 3}$. Then
${\rm degree}(E)\,=\,-1$ and $\mu(E) \,=\, -\frac{1}{4}$. Since $E$ has a subbundle of
degree zero, it is not semistable. The Harder-Narasimhan filtration of
$E$ is
$$0\,=\, E_0\, \subset\, E_1 \,=\, \str_X^{\oplus 3} \,\subset\, E_2 \,=\, E\, .$$ 
Let $r \,:=\, \text{rank}(E_2/E_1) \,=\, 1$. Note that
$\theta \,=\, \text{degree}(L) \,=\, -1$, and $E$ is
normalized as in \eqref{theta}. 

Since $\mu(E_2/E_1)-\mu(E_1/E_0) \,=\, -1 - 0 \,=\, \theta$, the hypotheses of
Theorem \ref{sc2} are satisfied. Thus this theorem precisely computes the
Seshadri constants of line bundles on $\gr$. 

Note that the sections of $\str_{\gr}(1)$ are given by
global sections of $E$, and there are three linearly independent global sections
of $E$ given by the three copies of $\str_X$ in $E$. These three
sections of $\str_{\gr}(1)$ meet precisely in $\Gamma_s$, which is the
section of the projection $\gr \,\longrightarrow\, X$ given by the rank 1 quotient $E
\,\longrightarrow\, L$. So $\Gamma_s$ is the only curve in $\gr$ which meets
$\str_{\gr}(1)$ negatively. In this example, this can also be seen by
observing that $E \,\longrightarrow\, L$ is the only rank 1 quotient of $E$ which has
negative degree. 
\end{example}

\begin{example}\label{ex2}
Let $X$ be a smooth complex projective curve. Let $L$ be a line bundle
of degree $-1$ on $X$, and set $E \,=\, L^{\oplus 2}\oplus \str_X^{\oplus
2}$. Then $\mu(E) \,=\, -\frac{1}{2}$, and $E$ is not semistable because it has
a subbundle of degree zero. The Harder-Narasimhan filtration of $E$ is 
$$0\,=\, E_0\, \subset\, E_1 \,=\, \str_X^{\oplus 2} \,\subset\, E_2 \,=\, E\, .$$ 
Let $r \,:=\, \text{rank}(L^{\oplus 2}) \,=\, 2$. Note that
$\theta \,=\, \text{degree}(L^{\oplus 2}) \,=\, -2$, and $E$ is
normalized as in \eqref{theta}. 

Since $\mu(E_2/E_1)-\mu(E_1/E_0) \,=\, -1 - 0 \,>\, \theta$, the hypotheses of
Theorem \ref{sc2} are not satisfied. Indeed, $\gr$ contains curves
which meet $\str_{\gr}(1)$ negatively, but are different from
$\Gamma_s$. For example, the section of $\gr \,\longrightarrow\, X$ corresponding to
the rank 2 quotient $E \,\longrightarrow\, \str_X \oplus L$ has intersection -1 with
$\str_{\gr}(1)$. 

On the other hand, the hypotheses of Theorem \ref{sc1} are satisfied, because 
$\text{rank}(E_1) \,=\, 2$ and $\text{degree}(E_1) \,= 0$. Hence by Theorem 
\ref{sc1}, we can find precise values of Seshadri constants of ample line bundles 
on $\gr$ at points not contained in the base locus of $\str_{\gr}(1)$. In fact, we 
have precise values at points not contained in curves which meet $\str_{\gr}(1)$ 
negatively.
\end{example}

\begin{example}\label{ex3}
Let $X$ be a smooth complex projective curve. Let $L_{-1}$ be a line bundle of 
degree $-1$ on $X$, and let $L_1$ be a line bundle of degree $1$. Set $E \,=\, 
L_{-1}^{\oplus 2}\oplus L_1^{\oplus 2}$. Then $\mu(E) \,=\, 0$, and $E$ is not 
semistable because it has a sub-line-bundle of degree 1. The Harder-Narasimhan 
filtration of $E$ is $$0\,=\, E_0\, \subset\, E_1 \,=\, L_1^{\oplus 2} 
\,\subset\, E_2 \,=\, E\, .$$ Let $r \,:=\, \text{rank}(E_2/E_1) \,=\, 
\text{rank}(L_{-1}^{\oplus 2}) \,=\, 2$. Note that $$\theta\,=\, \text{degree}(E_2/E_1)
\,=\, \text{degree}(L_{-1}^{\oplus 2}) \,=\, -2\, ,$$ and $E$ is normalized as in
\eqref{theta}.

Since $\mu(E_2/E_1)-\mu(E_1/E_0) \,=\, -1 - 1 \,= \,\theta$, the hypotheses of
Theorem \ref{sc2} are satisfied. Thus we can precisely compute the
Seshadri constants of line bundles on $\gr$. 
\end{example}

\begin{example}\label{ex4}
Let $X$ be a smooth complex projective curve of positive genus. Let $L_{-1}$ be a 
line bundle of degree $-1$ on $X$, and let $L_0$ be a non-trivial line bundle of 
degree $0$. Set $E \,=\, L_{-1}\oplus \str_X^{\oplus 2} \oplus L_0$. Then $\mu(E)\,=\, 
-\frac{1}{4}$ and $E$ is not semistable because it has subbundles of degree $0$. The 
Harder-Narasimhan filtration of $E$ is $$0\,=\, E_0\, \subset\, E_1 \,=\, 
\str_X^{\oplus 2} \oplus L_0 \,\subset\, E_2 \,=\, E\, .$$ Let $r \,:=\, 
\text{rank}(E_2/E_1) \,=\, \text{rank}(L_{-1}) \,=\, 1$. Note that $\theta \,=\, 
\text{degree}(E_2/E_1) \,=\, -1$, and $E$ is normalized as in \eqref{theta}.

Since $\mu(E_2/E_1)-\mu(E_1/E_0) \,=\, -1 - 0 \,=\, \theta$, the hypotheses of
Theorem \ref{sc2} are satisfied. Thus we can precisely compute the
Seshadri constants of line bundles on $\gr$. 

Unlike in Example \ref{ex1}, the line bundle $\str_{\gr}(1)$ has only two 
independent sections. So the base locus of $\str_{\gr}(1)$ strictly contains 
$\Gamma_s$, which is the section of $\gr\,\longrightarrow\, X$ corresponding to 
the quotient $E \,\longrightarrow\, L_{-1}$. However, note that $\Gamma_s$ is the 
only curve in $\gr$ which meets $\str_{\gr}(1)$ negatively, by Corollary 
\ref{cor2}.
\end{example}

\section*{Acknowledgements}

We thank the referee for helpful comments to improve the exposition.
The first three authors thank the International Centre for Theoretical Sciences at
Bangalore for hospitality. The first author is partially supported by a J. C. Bose
Fellowship. The second author is partially supported by a grant from Infosys
Foundation.

%%%%%%%%%%%%%%%%%%%%%%%%%%%%%%%%%%%%%%%%%%%%%%%%%%%%%%%%%%%%%%%%%%%%%%%%%%


\begin{thebibliography}{ZZZZZ}

\bibitem[Ba]{Ba} T. Bauer, Seshadri constants and periods of polarized
 abelian varieties, {\it Math. Ann.} {\bf 312} (1998), 607--623.

\bibitem[BDHKKSS]{B} T. Bauer, S. Di Rocco, B. Harbourne, M. Kapustka, A. Knutsen, W. 
Syzdek and T. Szemberg, A primer on Seshadri constants, {\it Interactions of 
classical and numerical algebraic geometry}, 33--70, Contemp. Math., 496, Amer. Math. 
Soc., Providence, RI, 2009.

\bibitem[BSS]{BSS} M. Beltrametti, S. Schneider and A. Sommese, 
Applications of the Ein-Lazarsfeld criterion for spannedness of
adjoint bundles, {\it Math. Z.} {\bf 214} (1993), 593--599.

\bibitem[BHP]{BHP} I. Biswas, A. Hogadi and A. J. Parameswaran, Pseudo-effective cone
of Grassmann bundles over a curve, {\it Geom. Dedicata} {\bf 172} (2014), 69--77.

\bibitem[BP]{BP} I. Biswas and A. J. Parameswaran, Nef cone of flag bundles over a 
curve, \textit{Kyoto Jour. Math.} {\bf 54} (2014), 353--366.

\bibitem[BS]{BS} I. Biswas and S. Subramanian, Semistability and finite maps, {\it Arch. Math.}
{\bf 93} (2009), 437--443.

\bibitem[CN]{CN} P. Cascini and M. Nakamaye, Seshadri constants on
smooth threefolds, {\it Adv. Geom.} {\bf 14} (2014), 59--79.

\bibitem[Deb]{Deb} O. Debarre, Seshadri constants of abelian
varieties, {\it The Fano Conference}, 379--394, Univ. Torino, Turin, 2004.

\bibitem[Dem]{D} J.-P. Demailly, Singular Hermitian
metrics on positive line bundles, {\it Complex algebraic varieties}
(Bayreuth, 1990), 87--104, Lecture Notes in Math., 1507, Springer, Berlin, 1992. 

\bibitem[EKL]{EKL} L. Ein, O. K\"{u}chle and R. Lazarsfeld,
Local positivity of ample line bundles, {\it J. Differential Geom.} 
{\bf 42} (1995), 193--219.

\bibitem[EL]{EL} L. Ein and R. Lazarsfeld, Seshadri constants on smooth
surfaces, {\it Journ\'{e}es de G\'{e}om\'{e}trie Alg\'{e}brique d'Orsay} (Orsay,
1992), {\it Ast\'{e}risque} No. 218 (1993), 177--186.

\bibitem[Ga]{F} L. F. Garc\'ia, Seshadri constants on ruled surfaces: the rational 
and the elliptic cases, {\it Manuscr. Math.} {\bf 119} (2006), 483--505.

\bibitem[Hac]{Hac} C. Hacon, Remarks on Seshadri constants of vector
bundles, {\it Ann. Inst. Fourier} {\bf 50} (2000), 767--780. 

\bibitem[HM]{HM} K. Hanumanthu and A. Mukhopadhyay, 
Multi-point Seshadri constants on ruled surfaces, 
{\it Proc. Amer. Math. Soc.} {\bf 145} (2017), 5145--5155.

\bibitem[Har1]{H} R. Hartshorne, Robin, {\it Ample subvarieties of algebraic 
varieties}, Notes written in collaboration with C. Musili. Lecture Notes in 
Mathematics, Vol. 156, Springer-Verlag, Berlin-New York 1970.

\bibitem[Har2]{Har-AG} R. Hartshorne, {\it Algebraic geometry}, Graduate
Texts in Mathematics, No. 52. Springer-Verlag, New York-Heidelberg, 1977.

\bibitem[It1]{It1} A. Ito, Okounkov bodies and Seshadri constants, {\it Adv. 
Math.} {\bf 241} (2013), 246--262.

\bibitem[It2]{It2} A. Ito, Seshadri constants via toric degenerations, {\it J. 
Reine Angew. Math.} {\bf 695} (2014), 151--174.

\bibitem[La1]{La1} R. Lazarsfeld, Lengths of periods and Seshadri
constants of abelian varieties, {\it Math. Res. Lett.} {\bf 3} (1996), 439--447. 

\bibitem[La2]{La2} R. Lazarsfeld, {\it Positivity in algebraic geometry I 
Classical setting: line bundles and linear series}, Ergebnisse der
Mathematik und ihrer Grenzgebiete. 3. 
Folge. A Series of Modern Surveys in Mathematics 48, Springer-Verlag, Berlin, 2004.

\bibitem[Le]{Le} S. Lee, Seshadri constants and Fano manifolds, {\it Math. Zeit.}
{\bf 245} (2003), 645--656.

\bibitem[Na]{Na} M. Nakamaye, Seshadri constants on abelian varieties,
{\it Amer. J. Math.} {\bf 118} (1996), 621--635. 

\bibitem[Og]{Og} K. Oguiso, {\it Seshadri constants in a family of surfaces}, 
{\it Math. Ann.} {\bf 323} (2002), 625--631.

\bibitem[Sy]{S} W. Syzdek, {\it Seshadri constants
and geometry of surfaces}, Ph.D thesis, 2005, University of Duisburg-Essen. 

\end{thebibliography}
\end{document}